\renewcommand{\Re}{\operatorname{Re}}
\renewcommand{\(}{\left\(}
\renewcommand{\)}{\right\)}
\renewcommand{\[}{\left\[}
\renewcommand{\]}{\right\]}
\numberwithin{equation}{section}
\theoremstyle{plain}
\newtheorem{theorem}{Theorem}[section]
\newtheorem{lemma}[theorem]{Lemma}
\newtheorem{remark}[]{Remark}
\begin{document}
	\title[Laurent series expansions of $L$-functions]{Laurent series expansions of $L$-functions}

	\author{Tushar Karmakar}
	\address{Tushar Karmakar\\ Department of Mathematics \\
Tulane University,  New Orleans, LA 70118,  USA. } 
\email{tkarmakar@tulane.edu}

	 \author{Saikat Maity}
\address{Saikat Maity\\ Department of Mathematics \\
Indian Institute of Technology Indore \\
Indore, Simrol, Madhya Pradesh 453552, India.} 
\email{saikat.maths1729@gmail.com}
	
	 \author{Bibekananda Maji}
\address{Bibekananda Maji\\ Department of Mathematics \\
Indian Institute of Technology Indore \\
Indore, Simrol, Madhya Pradesh 453552, India.} 
\email{bibek10iitb@gmail.com, bmaji@iiti.ac.in}

	\thanks{2010 \textit{Mathematics Subject Classification.} Primary 11M06; Secondary 11M41.\\
\textit{Keywords and phrases.} Euler constants,  Laurent series, Riemann zeta function,  Hurwitz zeta function,  Dirichlet $L$-function,  $L$-functions,  cusp forms.}

	\maketitle
	
%
	\begin{abstract}
	One of the main objectives of the current paper is to revisit the well known Laurent series expansions of the Riemann zeta function $\zeta(s)$,  Hurwitz zeta function $\zeta(s,a)$ and Dirichlet $L$-function $L(s,\chi)$ at $s=1$.  Moreover,  we also present a new Laurent series expansion of $L$-functions associated to cusp forms over the full modular group.  
	\end{abstract}
	
	\section{Introduction}

Finding a Laurent series expansion of a meromorphic function has always been a question of curiosity for mathematicians.  Riemann \cite{Riemann}, in his seminal paper,  showed that the Riemann zeta function $\zeta(s)$ can be analytically continued to the whole complex plane except at $s=1$.  Moreover,  at $s=1$,  $\zeta(s)$ has a simple pole with residue $1$.  
	In 1955,  Briggs and Chowla \cite{Briggs and Chowla} gave two different proofs for the Laurent series expansion of $\zeta(s)$ at $s=1$.  Mainly,  they showed that
	\begin{align*}
	\zeta(s)= \frac{1}{s-1}+ \sum_{n=0}^\infty \frac{(-1)^n \gamma_n}{n!} (s-1)^n,
	\end{align*}
	where $\gamma_0=\gamma$ is the well-known Euler–Mascheroni constant,  and for $n \geq 1$,  $\gamma_n$ is defined by 
	\begin{align*}
	\gamma_n = \lim_{N \rightarrow \infty} \left\{ \sum_{\ell=1}^N \frac{\log^n(\ell)}{\ell} - \frac{\log^{n+1}(N)}{n+1} \right\}.
	\end{align*}
	The Dirichlet $L$-function $L(s,  \chi)$ and Hurwitz zeta function $\zeta(s,a)$ are two well-known generalizations of $\zeta(s)$.  If $\chi$ is not a principal character,  $L(s,\chi)$ can be analytically continued to the whole $\mathbb{C}$.  
	Like $\zeta(s)$,  $\zeta(s,a)$ can be  analytically continued to
	 the whole complex plane except for a simple pole at $s = 1$.   In 1972,   Berndt \cite{Berndt} derived the Laurent series expansion of $\zeta(s,a)$ at  $s=1$.  In 1961,  Briggs and Buschman \cite{Briggs-Buschman} studied Laurent series of a general Dirichlet series having a simple pole.  Readers are also encouraged to see \cite{Briggs62,  Knopfmacher,  Shirasaka}  and references therein.  In this paper,  we present simple methods to prove the Laurent series expansion of $\zeta(s)$,  $L(s, \chi)$ and $\zeta(s,a)$.  Furthermore,  we obtain a new Laurent series expansion of $L$-functions associated to cusp forms over the full modular group.  
	
	Let $k$ be an even positive integer and $\mathbb{H}$ denote the upper half plane. Let $S_k (SL_2 (\mathbb{Z}))$ denote the space of cusp form of weight $k$ over the full modular group $SL_2 (\mathbb{Z})$. Let $f (z) \in S_k (SL_2 (\mathbb{Z}))$ be a normalized Hecke eigenform with the Fourier series expansion,  
\begin{align}\label{Fouries series}	
	f (z) = \sum_{n=1}^{\infty} a_f (n) \exp (2\pi inz), \hspace{0.4cm} \forall z \in \mathbb{H}.
	\end{align}
	The $L$-function associated to the cusp form $f$ is defined as 
	$$
	L(f,s)= \sum_{n=1}^{\infty} \frac{a_f(n)}{n^s}. 
	$$
	This $L$-function is absolutely convergent for $\Re(s) > \frac{k+1}{2}$ as we know from
	Delinge’s bound $a_f (n) = O(n^{\frac{(k-1)}{2}} + \epsilon)$ for any $\epsilon > 0$.
	Hecke proved that $L( f ,s)$ can be analytically continued to an entire function and it
	satisfies following functional equation:
	\begin{align}\label{functional equn}
	(2\pi)^{-s} \Gamma(s) L(f,s) = i^k (2\pi)^{-(k-s)} \Gamma(k-s) L(f,k-s).
	\end{align}
This functional equation is equivalent to the following transformation formula,  for $\Re(y)>0$, 
\begin{align}\label{Analogue of Jacobi theta transformation}
y^k W(y)= W(1/y),
\end{align}
where 
\begin{align*}
W(y)= \sum_{n=1}^{\infty} a_f (n) \exp(-2\pi ny).  
\end{align*}
Readers can see \cite{ Bochner, KC} to know more about different identities equivalent to the functional equation \eqref{functional equn}.  
One of the main aims of this paper is to derive a Laurent series expansion of $L(f,s)$ at $s=0$.  
	
	 Now we define the upper incomplete gamma function $\Gamma(s,  a)$,  for $a>0$,  $s \in \mathbb{C}$,  as follows:
	\begin{align}\label{incomplete gamma}
	\Gamma(s,  a) = \int_{a}^\infty e^{-y} y^{s-1} dy.  
	\end{align}
As $ a \rightarrow \infty$,  we have the following asymptotic expansion\cite[p. ~179,  Equation (8.11.2)]{NIST}:
\begin{align}\label{asymp1}
\Gamma(s,  a) \sim a^{s-1} e^{-a}. 
\end{align}	
More generally,   for $a>0,  \ell \geq 0,  s \in \mathbb{C}$,  we define  the following one variable generalization of the above incomplete gamma function: 
\begin{align}\label{gen incomplete gamma}
\Gamma_\ell(s,  a):= \int_{a}^\infty e^{-y} \log^{\ell}(y) y^{s-1} dy.    
\end{align}
For $ a \rightarrow \infty$,  we have the following asymptotic expansion:
\begin{align}\label{asymp2}
\Gamma_{\ell}(s,  a) \sim a^{s-1} \log^{\ell}(a)  e^{-a}. 
\end{align}	
	Now we are ready to state main results of this paper. 
	\section{Main Results}
	First,  we will present simple proofs of the Laurent series expansions of $\zeta(s),  \zeta(s,  a)$ and $L(s, \chi)$.  	
	\begin{theorem} \label{Laurent series coefficients for Riemann zeta function}
		The Laurent series of the Riemann zeta function $\zeta(s)$ at its pole $s=1$ is given by 
		$$\zeta(s) = \frac{1}{s-1} + \sum_{k=0}^{\infty} \frac{(-1)^k}{k!} \gamma_k (s-1)^k,$$
		where 
\begin{align}\label{gamma_k}		
		\gamma_k = \lim_{m \to \infty}\left({\sum_{i=1}^{m}{\frac{\log^k(i)}{i}}}- \frac{\log^{k+1}(m)}{k+1}\right).
		\end{align}
	\end{theorem}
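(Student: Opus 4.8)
The plan is to first obtain an analytic continuation of $\zeta(s)$ to the half-plane $\Re(s)>0$ by comparing the Dirichlet series with an integral, and then to expand that continuation directly in powers of $(s-1)$, reading off the coefficients. First I would start, for $\Re(s)>1$, from
\begin{align*}
\zeta(s) = \sum_{n=1}^{\infty} \frac{1}{n^s} = \int_{1}^{\infty}\frac{dx}{x^s} + \sum_{n=1}^{\infty}\left(\frac{1}{n^s} - \int_{n}^{n+1}\frac{dx}{x^s}\right) = \frac{1}{s-1} + \sum_{n=1}^{\infty}\phi_n(s),
\end{align*}
where $\phi_n(s) := n^{-s} - \int_n^{n+1} x^{-s}\,dx = \int_n^{n+1}(n^{-s}-x^{-s})\,dx$. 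Writing $n^{-s}-x^{-s} = s\int_n^{x} t^{-s-1}\,dt$ gives $|\phi_n(s)| \ll_{s} n^{-\Re(s)-1}$, so the series $\sum_n \phi_n(s)$ converges locally uniformly on $\Re(s)>0$ and provides the continuation; in particular $\zeta(s)-\frac{1}{s-1}$ is holomorphic in a neighbourhood of $s=1$.

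Next I would set $s=1+w$ and use $n^{-s}=n^{-1}e^{-w\log n}$ to expand each term as
\begin{align*}
\phi_n(1+w) = \sum_{k=0}^{\infty} \frac{(-w)^k}{k!}\, c_{n,k}, \qquad c_{n,k} := \frac{\log^k n}{n} - \int_n^{n+1}\frac{\log^k x}{x}\,dx .
\end{align*}
The crucial estimate is on $c_{n,k}$: since $c_{n,k}=\int_n^{n+1}\bigl(\tfrac{\log^k n}{n}-\tfrac{\log^k x}{x}\bigr)\,dx$ and $\frac{d}{dx}\bigl(x^{-1}\log^k x\bigr)=x^{-2}\bigl(k\log^{k-1}x-\log^k x\bigr)$, the mean value theorem yields $|c_{n,k}| \ll n^{-2}\bigl(\log^k(n+1)+k\log^{k-1}(n+1)\bigr)$. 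Comparing with $\int_1^{\infty} x^{-2}\log^k x\,dx=k!$ (substitute $x=e^u$), this gives $\sum_{n=1}^{\infty}|c_{n,k}| \ll k!$ with an absolute implied constant. Hence the double series $\sum_{n,k}\frac{|w|^k}{k!}|c_{n,k}|$ converges for $|w|<1$, which simultaneously (i) justifies interchanging the summations over $n$ and $k$ and (ii) shows the resulting power series has positive radius of convergence. After the interchange,
\begin{align*}
\zeta(s) - \frac{1}{s-1} = \sum_{k=0}^{\infty} \frac{(-1)^k (s-1)^k}{k!}\, \sum_{n=1}^{\infty} c_{n,k} .
\end{align*}

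Finally I would evaluate the inner sum by telescoping the integrals: $\sum_{n=1}^{m}\int_n^{n+1}\frac{\log^k x}{x}\,dx = \frac{\log^{k+1}(m+1)}{k+1}$, so
\begin{align*}
\sum_{n=1}^{\infty} c_{n,k} = \lim_{m\to\infty}\left(\sum_{n=1}^{m}\frac{\log^k n}{n} - \frac{\log^{k+1}(m+1)}{k+1}\right) = \gamma_k ,
\end{align*}
the last equality because $\log^{k+1}(m+1)-\log^{k+1}(m)\to 0$, so replacing $m+1$ by $m$ does not change the limit in \eqref{gamma_k}. This establishes the expansion for $|s-1|<1$, and since $\zeta(s)-\frac{1}{s-1}$ is holomorphic at $s=1$, uniqueness of Taylor (hence Laurent) coefficients finishes the proof. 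I expect the main obstacle to be precisely the estimate on $c_{n,k}$ together with the bound $\sum_n|c_{n,k}|\ll k!$: everything else is bookkeeping, but without a factorial-type bound one can neither swap the two sums nor guarantee convergence of the series near $s=1$.
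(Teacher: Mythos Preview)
Your argument is correct, and it takes a somewhat different route from the paper's. The paper starts from the fractional-part integral representation $\zeta(s)=\tfrac{s}{s-1}-s\int_1^\infty\{x\}\,x^{-s-1}\,dx$ (its Lemma~3.1), manipulates the integral into the closed form
\[
h(s)=\lim_{r\to\infty}\Bigl[\sum_{i=1}^{r}i^{-s}-1+\sum_{l\ge1}\frac{(-1)^l}{l!}(s-1)^{l-1}\log^{l} r\Bigr],
\]
and then differentiates $k$ times at $s=1$ inside this limit to obtain $h^{(k)}(1)=(-1)^k\gamma_k$. You instead use the term-by-term decomposition $\zeta(s)=\tfrac{1}{s-1}+\sum_{n}\phi_n(s)$ with $\phi_n(s)=n^{-s}-\int_n^{n+1}x^{-s}\,dx$, expand each $\phi_n$ as a power series in $(s-1)$, and justify swapping the two sums via the explicit estimate $\sum_n|c_{n,k}|\ll k!$. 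The two approaches are closely related in spirit (both compare $\sum n^{-s}$ with $\int x^{-s}$), but yours is more self-contained---it does not invoke a prepackaged integral representation---and is more scrupulous about the interchange of limits, whereas the paper's argument is shorter but tacitly assumes one may differentiate under $\lim_{r\to\infty}$. One minor remark: your step ``comparing with $\int_1^\infty x^{-2}\log^k x\,dx=k!$ gives $\sum_n|c_{n,k}|\ll k!$'' is correct but not entirely immediate, since $x^{-2}\log^k x$ is not monotone on $[1,\infty)$; splitting the sum at the maximum $x=e^{k/2}$ (where the peak value times the number of terms is $e^{-k/2}(k/2)^k\ll k!$) closes the gap.
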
 
	
	\begin{theorem} \label{Laurent series coefficients for Hurwitz zeta function}
		The Laurent series of the Hurwitz zeta function $\zeta(s,a)$ at its pole $s=1$ is given by 
		$$\zeta(s,  a) = \frac{1}{s-1} + \sum_{k=0}^{\infty} \frac{(-1)^k}{k!} \gamma_k (a) (s-1)^k,$$
		where 
		$$\gamma_{k}(a)= \lim_{m \to \infty} \left\{ \sum_{i=0}^{m} \frac{(\log(i+a))^{k}}{(i+a)} - \frac{(\log(m+a))^{k+1}}{k+1} \right\}.$$
	\end{theorem}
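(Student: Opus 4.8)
The plan is to reduce the Hurwitz case to essentially the same mechanism used for $\zeta(s)$ in Theorem \ref{Laurent series coefficients for Riemann zeta function}, exploiting the fact that $\zeta(s,a)=\sum_{n=0}^{\infty}(n+a)^{-s}$ converges for $\Re(s)>1$ and has a simple pole at $s=1$ with residue $1$. First I would write, for $\Re(s)>1$,
\begin{align*}
\zeta(s,a)-\frac{1}{s-1}=\sum_{n=0}^{\infty}\frac{1}{(n+a)^{s}}-\int_{a-1}^{\infty}\frac{dx}{x^{s}},
\end{align*}
or a variant of this comparison (for instance comparing the $N$-th partial sum with $\int_{?}^{?} x^{-s}\,dx$ so that the boundary terms produce exactly $(\log(m+a))^{k+1}/(k+1)$ after expansion), and check that the right-hand side extends holomorphically to a neighbourhood of $s=1$. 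Then I would expand $(n+a)^{-s}=(n+a)^{-1}\exp\bigl(-(s-1)\log(n+a)\bigr)=(n+a)^{-1}\sum_{k=0}^{\infty}\frac{(-(s-1))^{k}}{k!}\log^{k}(n+a)$ and similarly expand $x^{-s}$ inside the integral, interchanging the order of summation and integration/summation.

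The next step is to identify the coefficient of $(s-1)^{k}$. After the expansions, the coefficient of $(s-1)^{k}$ is $\frac{(-1)^{k}}{k!}$ times
\begin{align*}
\lim_{m\to\infty}\left\{\sum_{n=0}^{m}\frac{\log^{k}(n+a)}{n+a}-\int^{?}\frac{\log^{k}(x)}{x}\,dx\right\},
\end{align*}
and the antiderivative of $\log^{k}(x)/x$ is $\log^{k+1}(x)/(k+1)$, which matches the claimed formula for $\gamma_{k}(a)$ provided the upper limit of integration is chosen as $m+a$ and the lower limit contributes only a finite constant absorbed into the limit (one must be slightly careful when $a<1$ so that $\log(x)$ stays real and the integrand is integrable near the lower endpoint; taking the lower limit to be, say, $a$ itself or $1$ and tracking the resulting constant handles this). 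I expect the routine-but-delicate point to be exactly the justification of interchanging the limit $m\to\infty$, the sum over $n$, and the series in $(s-1)$: one needs a uniform bound, valid for $s$ in a small disc around $1$, on the tails $\sum_{n>m}(n+a)^{-s}-\int_{m+a}^{\infty}x^{-s}\,dx$, which follows from Euler--Maclaurin (the difference is $O(m^{-\Re(s)})$ uniformly) together with the observation that $\log^{k}(n+a)/k!$ has summable-after-weighting growth so that the double series converges absolutely and locally uniformly in $s$.

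The main obstacle, such as it is, will be organizing these uniformity estimates cleanly rather than any conceptual difficulty; the structure of the argument is identical to the $\zeta(s)$ case, with $n$ replaced by $n+a$ throughout, and the only genuinely new bookkeeping is the shift of the starting index to $n=0$ and the harmless constant coming from the lower limit of the comparison integral. An alternative, even shorter route would be to deduce Theorem \ref{Laurent series coefficients for Hurwitz zeta function} from Theorem \ref{Laurent series coefficients for Riemann zeta function} by writing $\zeta(s,a)=\sum_{0\le n<1-a}(n+a)^{-s}+\bigl(\text{something reducible to }\zeta(s)\text{-type sums}\bigr)$ when $a$ is rational, but since $a$ is an arbitrary positive real, I would instead just run the self-contained Euler--Maclaurin comparison above, which needs no rationality hypothesis.
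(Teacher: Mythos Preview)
Your proposal is correct and lands on exactly the same mechanism the paper uses: one shows that
\[
\zeta(s,a)-\frac{1}{s-1}=\lim_{p\to\infty}\Bigg[\sum_{i=0}^{p}\frac{1}{(i+a)^{s}}+\frac{(p+a)^{1-s}}{s-1}-\frac{1}{s-1}\Bigg],
\]
expands $(p+a)^{1-s}=\exp\bigl(-(s-1)\log(p+a)\bigr)$ as a power series in $(s-1)$, and then differentiates $k$ times at $s=1$ (equivalently, reads off the coefficient of $(s-1)^{k}$) to obtain $\gamma_{k}(a)$. The only difference is in how that key identity is reached. The paper does not set up a direct Euler--Maclaurin sum-versus-integral comparison; instead it starts from Apostol's fractional-part representation (Lemma~\ref{Integral representation of Hurwitz zeta function}),
\[
\zeta(s,a)=\sum_{n=0}^{N}\frac{1}{(n+a)^{s}}+\frac{(N+a)^{1-s}}{s-1}-s\int_{N}^{\infty}\frac{x-[x]}{(x+a)^{s+1}}\,dx,
\]
and evaluates the integral explicitly by splitting it over $[i,i+1]$ to collapse everything to the displayed limit above. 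Your route is a bit more streamlined and avoids the detour through the fractional-part integral, at the cost of having to justify the uniform tail bound yourself (which the paper gets for free from the already-analytic Apostol formula); the paper's route has the advantage that no care about the lower limit of the comparison integral (your worry about $a<1$) is ever needed, since $1/(s-1)$ appears directly in Lemma~\ref{Integral representation of Hurwitz zeta function} rather than as the value of an improper integral.
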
 
	
	\begin{theorem} \label{Laurent series coefficients for Dirichlet l=function}
		Let $\chi$ be a non-principal Dirichlet character modulo $q$.  The  Laurent series of the Dirichlet $L$-function $L(s,\chi)$ at $s=1$ is given by 
		$$L(s,\chi) = \sum_{k=0}^{\infty} \frac{(-1)^k}{k!} \gamma_k (\chi) (s-1)^k,$$ where 
		$$\gamma_k (\chi) = \sum_{a=1}^{q} \chi(a) \gamma_k (a,q),$$
		where \begin{align}\label{gamma_k(a,q)}
\gamma_k(a,  q) = \lim_{x \rightarrow \infty} \sum_{\substack{n \leq x, \\ n \equiv a \pmod{q}} } \left( \frac{\log^k(n)}{n} - \frac{\log^{k+1}(x)}{q(k+1)}    \right).
\end{align}
		\end{theorem}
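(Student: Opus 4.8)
The plan is to reduce everything to the Hurwitz zeta case already settled in Theorem~\ref{Laurent series coefficients for Hurwitz zeta function}. For $\Re(s)>1$, splitting the Dirichlet series according to residue classes modulo $q$ gives
\begin{align*}
L(s,\chi)=\sum_{a=1}^{q}\chi(a)\sum_{\substack{n\geq 1\\ n\equiv a \pmod{q}}}\frac{1}{n^{s}}=q^{-s}\sum_{a=1}^{q}\chi(a)\,\zeta(s,a/q),
\end{align*}
and since $\chi$ is non-principal we have $\sum_{a=1}^{q}\chi(a)=0$, so the right-hand side is entire and the identity persists for all $s\in\mathbb{C}$. In particular $L(s,\chi)$ is analytic at $s=1$, so the asserted expansion is genuinely a Taylor series there.

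Substituting the Laurent expansion of each $\zeta(s,a/q)$ from Theorem~\ref{Laurent series coefficients for Hurwitz zeta function}, the simple poles $1/(s-1)$ cancel --- again because $\sum_{a}\chi(a)=0$ --- leaving
\begin{align*}
\sum_{a=1}^{q}\chi(a)\,\zeta(s,a/q)=\sum_{i=0}^{\infty}\frac{(-1)^{i}}{i!}\left(\sum_{a=1}^{q}\chi(a)\,\gamma_{i}(a/q)\right)(s-1)^{i}.
\end{align*}
Expanding $q^{-s}=q^{-1}\sum_{j\geq 0}\frac{(-\log q)^{j}}{j!}(s-1)^{j}$ and forming the Cauchy product of the two power series, the coefficient of $(s-1)^{k}$ in $L(s,\chi)$ equals
\begin{align*}
\frac{(-1)^{k}}{k!}\cdot\frac{1}{q}\sum_{i=0}^{k}\binom{k}{i}(\log q)^{k-i}\sum_{a=1}^{q}\chi(a)\,\gamma_{i}(a/q).
\end{align*}

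It then remains to recognise $\frac{1}{q}\sum_{i=0}^{k}\binom{k}{i}(\log q)^{k-i}\sum_{a}\chi(a)\gamma_{i}(a/q)$ as $\sum_{a=1}^{q}\chi(a)\gamma_{k}(a,q)$. For this I would substitute $n=jq+a$ in the defining limit \eqref{gamma_k(a,q)} for $\gamma_{k}(a,q)$, expand $\log^{k}(jq+a)=(\log q+\log(j+a/q))^{k}$ by the binomial theorem, treat the truncation term $\log^{k+1}(x)/(q(k+1))$ the same way, and use the elementary identity $\binom{k}{i}/(i+1)=\binom{k+1}{i+1}/(k+1)$ to pair the surviving pieces with the Hurwitz constants $\gamma_{i}(a/q)$. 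This gives $\gamma_{k}(a,q)=\frac{1}{q}\sum_{i=0}^{k}\binom{k}{i}(\log q)^{k-i}\gamma_{i}(a/q)-\frac{(\log q)^{k+1}}{q(k+1)}$, and the stray last term, being independent of $a$, disappears on summing against $\chi(a)$, once more because $\sum_{a}\chi(a)=0$.

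I expect the real difficulty to lie not in any single estimate but in making these formal manipulations rigorous: justifying the termwise Taylor expansion of the Dirichlet series near $s=1$ and its interchange with summation, interchanging the limits $m\to\infty$ hidden in the definitions of the $\gamma_{i}(a/q)$ with the finite sums over $i$ and $a$, and controlling the error incurred by replacing $\log(mq+a)$ with $\log q+\log(m+a/q)$ in the truncation term. A cleaner but equivalent route bypasses Theorem~\ref{Laurent series coefficients for Hurwitz zeta function} altogether: since $\chi$ is non-principal, the Dirichlet series $\sum_{n}\chi(n)n^{-s}$ converges conditionally for $\Re(s)>0$ and there represents $L(s,\chi)$, so $L(s,\chi)=\lim_{x\to\infty}\sum_{n\leq x}\chi(n)n^{-s}$; expanding $n^{-s}=n^{-1}\sum_{k\geq 0}\frac{(-1)^{k}\log^{k}n}{k!}(s-1)^{k}$ and interchanging the limit in $x$ with the sum over $k$ (the delicate step) identifies $\gamma_{k}(\chi)$ with the conditionally convergent sum $\sum_{n=1}^{\infty}\chi(n)\log^{k}(n)/n$, and breaking this into residue classes modulo $q$ while inserting the balancing terms $\pm\log^{k+1}(x)/(q(k+1))$ --- which cancel in aggregate since $\sum_{a}\chi(a)=0$ --- produces $\sum_{a=1}^{q}\chi(a)\gamma_{k}(a,q)$.
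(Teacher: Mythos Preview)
Your proof is correct. Your primary route---reducing to Theorem~\ref{Laurent series coefficients for Hurwitz zeta function} via $L(s,\chi)=q^{-s}\sum_{a}\chi(a)\zeta(s,a/q)$, forming the Cauchy product with the expansion of $q^{-s}$, and then verifying the combinatorial identity $\gamma_k(a,q)=\tfrac{1}{q}\sum_{i=0}^{k}\binom{k}{i}(\log q)^{k-i}\gamma_i(a/q)-\tfrac{(\log q)^{k+1}}{q(k+1)}$---is genuinely different from the paper's argument and works exactly as you describe (note that the sums over $i$ and $a$ are finite, so the interchanges you worry about in that route are automatic). The paper instead follows what you call the ``cleaner but equivalent route'': it uses Abel summation together with the bound $|A(x)|=\bigl|\sum_{n\leq x}\chi(n)\bigr|\leq\phi(q)$ to establish rigorously that $L^{(k)}(1,\chi)=(-1)^k\sum_{n\geq1}\chi(n)\log^k(n)/n$ (this is precisely the ``delicate step'' you flag), and then invokes Knopfmacher's result, stated as Lemma~\ref{Knopfmacher lemma}, to rewrite that conditionally convergent sum as $\sum_{a=1}^{q}\chi(a)\gamma_k(a,q)$. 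Your Route~1 buys a tighter logical dependence on the preceding Hurwitz theorem, at the cost of the extra binomial computation relating $\gamma_k(a,q)$ to the $\gamma_i(a/q)$; the paper's route is shorter once Abel summation and Lemma~\ref{Knopfmacher lemma} are available, and never needs that relation at all.
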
 
	The next result gives a Laurent series expansion of $L(f,s)$ at $s=0$.  
	\begin{theorem} \label{Laurent series coefficients for $L$-Function associated to cusp form of weight $k$}
		Let $f(z)$ be a cusp form of weight $k$ over $SL_2(\mathbb{Z})$ with the Fourier series expansion \eqref{Fouries series}.    
		Then the L-function $L(f, s)$ has the following Laurent series expansion at $s=0$, 
		\begin{align*}
		L(f, s) = \sum_{n=1}^\infty C(n, k) s^n,
		\end{align*}
	where the first two coefficients are given  by
	\begin{align}
	C(1,k) & =  \sum_{n=1}^\infty \frac{ a_f(n) \Gamma(k,  2 n \pi) }{(2n\pi)^k} + \sum_{n=1}^\infty  a_f(n) \Gamma(0,  2 n \pi),  \label{Final C(1,k)} \\
	C(2,k)&=  C(1,k) \left(  \gamma + \log(2\pi) \right) -  \sum_{n=1}^\infty a_f(n) \left(  \frac{\Gamma_1(k,  2n \pi) -\log(2n \pi) \Gamma(k,  2n\pi)}{(2n\pi)^k}   \right) \nonumber \\
& +  \sum_{n=1}^\infty a_f(n) \left( \Gamma_1(0,  2n \pi) -\log(2n \pi) \Gamma(0,  2n\pi)  \right),  \label{Final C(2,k)}
\end{align}		
where incomplete gamma functions $\Gamma(k, 2n\pi)$ and $\Gamma_1(k,  2 n \pi)$  are defined as in \eqref{incomplete gamma} and \eqref{gen incomplete gamma},  respectively.  
		\end{theorem}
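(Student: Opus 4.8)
The plan is to start from the standard integral representation that follows from the functional equation. From \eqref{functional equn} we have, for $\Re(s)$ in a suitable range,
\begin{align*}
(2\pi)^{-s}\Gamma(s) L(f,s) = \int_0^\infty W(y)\, y^{s-1}\, dy,
\end{align*}
and the transformation \eqref{Analogue of Jacobi theta transformation} lets us split the integral at $y=1$ and fold the piece over $(0,1)$ using $W(y)=y^{-k}W(1/y)$. This produces the Hecke-type completed integral
\begin{align*}
(2\pi)^{-s}\Gamma(s) L(f,s) = \int_1^\infty W(y)\bigl(y^{s-1} + i^k y^{k-s-1}\bigr)\, dy,
\end{align*}
which is manifestly entire in $s$ since $W(y)$ decays exponentially at $\infty$. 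Interchanging sum and integral (justified by absolute convergence, again from the exponential decay of each term $a_f(n)e^{-2\pi n y}$) and substituting $u = 2\pi n y$ in each term gives
\begin{align*}
(2\pi)^{-s}\Gamma(s) L(f,s) = \sum_{n=1}^\infty a_f(n)\left( \frac{\Gamma(s, 2\pi n)}{(2\pi n)^s} + i^k \frac{\Gamma(k-s, 2\pi n)}{(2\pi n)^{k-s}} \right),
\end{align*}
where $\Gamma(\cdot,\cdot)$ is the upper incomplete gamma function from \eqref{incomplete gamma}. (The factor $i^k$ is $\pm 1$ since $k$ is even; one should track its sign, but it will not affect the structure.)

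Next I would divide by $(2\pi)^{-s}\Gamma(s)$ to isolate $L(f,s)$, writing $(2\pi)^{-s}\Gamma(s)$ out near $s=0$. Since $\Gamma(s)$ has a simple pole at $s=0$ with $\Gamma(s) = \frac{1}{s} - \gamma + O(s)$, we have $\frac{1}{(2\pi)^{-s}\Gamma(s)} = \frac{(2\pi)^s}{\Gamma(s)} = s\,(2\pi)^s / (s\Gamma(s)) = s\,e^{s\log(2\pi)}/\Gamma(s+1)$, which is an \emph{analytic} function of $s$ vanishing to first order at $s=0$; its Taylor expansion begins $s\bigl(1 + (\log(2\pi)+\gamma)s + \cdots\bigr)$ using $1/\Gamma(s+1) = 1 + \gamma s + \cdots$. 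Because this prefactor vanishes at $s=0$, and because $L(f,s)$ is entire, the right-hand side $\sum_n a_f(n)(\cdots)$ — which is analytic near $s=0$ — must be multiplied by a function vanishing at $s=0$; hence $L(f,s) = s \cdot (\text{analytic}) \cdot (\text{analytic})$ has no constant term, i.e. $L(f,0)=0$, consistent with the claimed expansion $L(f,s)=\sum_{n\ge 1} C(n,k)s^n$. Then $C(1,k)$ is the value at $s=0$ of $\frac{(2\pi)^s}{s\Gamma(s)}$ times the bracketed series, i.e. $1$ times $\sum_n a_f(n)\bigl(\frac{\Gamma(k,2\pi n)}{(2\pi n)^k} + i^k\Gamma(k,2\pi n)\big|_{\text{second term at }s=0}\bigr)$; evaluating the second term at $s=0$ gives $i^k \Gamma(k,2\pi n)/(2\pi n)^k$... — here I must be careful: at $s=0$ the second summand is $i^k\Gamma(k,2\pi n)/(2\pi n)^k$, so actually both terms contribute, and matching to \eqref{Final C(1,k)} forces the identification $i^k\Gamma(k,2\pi n)/(2\pi n)^k$ with one piece; comparing with the stated formula, the first term of \eqref{Final C(1,k)} comes from the $y^{s-1}$ part evaluated via $\Gamma(0,2\pi n)$ and the second from the folded part — so I would recheck which exponent produces $\Gamma(0,\cdot)$ versus $\Gamma(k,\cdot)$, matching the two summands in \eqref{Final C(1,k)} against the two summands in the completed integral. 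For $C(2,k)$ I would differentiate once more: the coefficient of $s^2$ picks up (i) the $s^1$-coefficient $(\log(2\pi)+\gamma)$ of the Gamma-prefactor times $C(1,k)$ — giving the term $C(1,k)(\gamma+\log(2\pi))$ — plus (ii) the $s$-derivative of the bracketed series at $s=0$, which produces $\log$-weighted incomplete gamma integrals $\Gamma_1(\cdot,\cdot)$ from \eqref{gen incomplete gamma} via $\frac{d}{ds}\Gamma(s,a)/(2\pi n)^s = \bigl(\Gamma_1(s,a) - \log(2\pi n)\Gamma(s,a)\bigr)/(2\pi n)^s$, matching \eqref{Final C(2,k)}.

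So the skeleton is: (1) derive the completed integral from the functional equation; (2) expand term by term into incomplete gamma functions; (3) divide by the entire, first-order-vanishing function $(2\pi)^s/\Gamma(s)$ and Taylor-expand it; (4) read off $C(n,k)$ as convolutions of the two Taylor series, and compute $C(1,k)$, $C(2,k)$ explicitly. The main obstacle I anticipate is \emph{not} any deep analysis — all convergence is controlled by the exponential decay of $W$ — but rather the bookkeeping in step (3)–(4): correctly expanding $1/\Gamma(s)$ near $s=0$ (including the $\log(2\pi)$ and $\gamma$ constants), tracking the $i^k = (-1)^{k/2}$ sign, and keeping straight which of the two integral pieces yields $\Gamma_\ell(k,\cdot)$ and which yields $\Gamma_\ell(0,\cdot)$ so that the final formulas line up exactly with \eqref{Final C(1,k)} and \eqref{Final C(2,k)}. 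A secondary point requiring a line of justification is the interchange of the infinite sum over $n$ with the $s$-differentiation at $s=0$, which follows from uniform convergence on a small disc around $s=0$ using the asymptotics \eqref{asymp1} and \eqref{asymp2} for $\Gamma(s,2\pi n)$ and $\Gamma_\ell(s,2\pi n)$ as $n\to\infty$.
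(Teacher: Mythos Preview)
Your outline is essentially the paper's proof: derive the Hecke-type completed integral $(2\pi)^{-s}\Gamma(s)L(f,s)=\int_1^\infty W(y)(y^{s-1}+y^{k-s-1})\,dy$ by splitting at $y=1$ and applying the modular transformation, expand both sides as power series in $s$, multiply by the entire function $(2\pi)^s/\Gamma(s)=s+(\gamma+\log 2\pi)s^2+\cdots$, and read off the convolution coefficients. The only cosmetic difference is ordering: the paper expands $y^{s-1}+y^{k-s-1}$ as a Taylor series in $s$ \emph{before} integrating (producing coefficients $A(n,k)=\int_1^\infty W(y)y^{-1}\{(-1)^n y^k+1\}\log^n y\,dy$, then converting these to incomplete gammas), whereas you integrate first to get $\Gamma(s,2\pi n)/(2\pi n)^s$ and $\Gamma(k-s,2\pi n)/(2\pi n)^{k-s}$ and then differentiate in $s$. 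These are equivalent computations.

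One correction: you apply the transformation $W(y)=y^{-k}W(1/y)$ (equation \eqref{Analogue of Jacobi theta transformation}) but then insert an extra factor $i^k$ into the folded integral. If you use the transformation as stated, no $i^k$ appears --- the substitution $y\mapsto 1/y$ in $\int_0^1 y^{-k}W(1/y)y^{s-1}dy$ gives exactly $\int_1^\infty W(y)y^{k-s-1}dy$. The $i^k$ in \eqref{functional equn} is already absorbed into the sign convention of \eqref{Analogue of Jacobi theta transformation}, so drop it and your formulas will match \eqref{Final C(1,k)}--\eqref{Final C(2,k)} directly. This also resolves your hesitation about ``which exponent produces $\Gamma(0,\cdot)$ versus $\Gamma(k,\cdot)$'': at $s=0$ the term $y^{s-1}$ gives $\Gamma(0,2\pi n)$ and the term $y^{k-s-1}$ gives $\Gamma(k,2\pi n)/(2\pi n)^k$, exactly as in \eqref{Final C(1,k)}.
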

\begin{remark}		
		Note that the coefficient $C(1,k)$ is nothing but the first derivatives of the $L$-function $L(f,s)$ at $s=0$.  Therefore,  the above theorem gives us a new way to evaluate the derivative of  $L(f,s)$ at $s=0$.  The series present in \eqref{Final C(1,k)}-\eqref{Final C(2,k)} are rapidly convergent.  
		\end{remark}

		\section{Preliminaries} 
In this section,  we mention a few well known results that will be useful to obtain our results.	
		\begin{lemma}\cite[p.~56]{Apostol} \label{Integral representation of Riemann zeta function}
		For $\Re(s) >0$, $\zeta(s)$ satisfies the following integral representation,
		\begin{equation*}
			\zeta(s) = \frac{s}{s-1} - s \int_{1}^{\infty} \frac{x- [x]}{x^{s+1}} dx.  
		\end{equation*}
	\end{lemma}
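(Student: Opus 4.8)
The plan is to prove the identity first on the half-plane $\Re(s) > 1$, where $\zeta(s) = \sum_{n=1}^{\infty} n^{-s}$ converges absolutely, and then to extend it to the wider half-plane $\Re(s) > 0$ by analytic continuation. The engine of the argument is partial summation (equivalently, Euler's summation formula, or Riemann--Stieltjes integration by parts), which converts the Dirichlet series into an integral against the counting function $[x] = \sum_{n \leq x} 1$.

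First I would fix $\Re(s) > 1$, write the truncated sum as a Riemann--Stieltjes integral $\sum_{n=1}^{N} n^{-s} = \int_{1^-}^{N} x^{-s}\, d[x]$, and integrate by parts. Since $[x] = 0$ for $x < 1$ and $[N] = N$ for integer $N$, the boundary contribution at the lower endpoint vanishes and the upper endpoint yields $N^{1-s}$, giving
$$\sum_{n=1}^{N} n^{-s} = N^{1-s} + s \int_1^N \frac{[x]}{x^{s+1}}\, dx.$$
Next I would substitute $[x] = x - (x - [x])$, split the integral, and evaluate the elementary piece $s\int_1^N x^{-s}\,dx = \tfrac{s}{1-s}(N^{1-s} - 1)$. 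Collecting the terms proportional to $N^{1-s}$ turns the partial sum into
$$\sum_{n=1}^{N} n^{-s} = \frac{N^{1-s}}{1-s} + \frac{s}{s-1} - s\int_1^N \frac{x - [x]}{x^{s+1}}\, dx.$$

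Then I would let $N \to \infty$. For $\Re(s) > 1$ the boundary term $N^{1-s}/(1-s)$ tends to $0$, so the left side converges to $\zeta(s)$ and one recovers the claimed formula, but a priori only for $\Re(s) > 1$. The decisive step is the extension to $\Re(s) > 0$: since $0 \leq x - [x] < 1$, the integrand is dominated in modulus by $x^{-\Re(s)-1}$, so the integral $\int_1^\infty (x-[x])\, x^{-s-1}\,dx$ converges absolutely and locally uniformly on $\Re(s) > 0$, and by Morera's theorem (or differentiation under the integral sign) it defines a holomorphic function there. Consequently the right-hand side $\frac{s}{s-1} - s\int_1^\infty \frac{x-[x]}{x^{s+1}}\,dx$ is holomorphic on $\Re(s) > 0$ apart from the simple pole contributed by $\frac{s}{s-1}$ at $s=1$, and it agrees with $\zeta(s)$ on $\Re(s) > 1$; by the identity theorem the two coincide throughout $\Re(s) > 0$, which is the assertion of the lemma.

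I expect the only genuine subtlety to be this last analytic-continuation step, namely verifying that the integral is holomorphic on the enlarged region $0 < \Re(s) \leq 1$ where the defining series itself diverges. The partial-summation computation is routine bookkeeping; the one point demanding care there is the correct treatment of the boundary contributions at $x = 1$ and $x = N$, so that the two boundary terms combine cleanly and no stray constant survives in passing to the limit.
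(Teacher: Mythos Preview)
Your argument is correct and is essentially the standard textbook derivation; note, however, that the paper does not supply its own proof of this lemma but simply quotes it from Apostol \cite[p.~56]{Apostol}. What you have written is precisely the proof one finds there (partial summation followed by analytic continuation via the bound $0\le x-[x]<1$), so there is nothing to compare.
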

	
	\begin{lemma}\cite[p.~269,  Theorem 12.21]{Apostol}\label{Integral representation of Hurwitz zeta function}
		For any non negative integer $N$ and $\Re(s) >0$, 
		\begin{equation*}
			\zeta(s,a)= \sum_{n=0}^{N}\frac{ 1}{(n+a)^s} + \frac{ (N+a)^{1-s}}{s-1} -s \int_{N}^{\infty}\frac{x-[x]}{(x+a)^{s+1}}  dx. 
		\end{equation*}
	\end{lemma}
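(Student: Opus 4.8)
The plan is to establish the identity first in the half-plane of absolute convergence $\Re(s) > 1$, where $\zeta(s,a) = \sum_{n=0}^{\infty} (n+a)^{-s}$, and then to extend it to $\Re(s) > 0$ by analytic continuation. First I would split the defining series into the finite head $\sum_{n=0}^{N} (n+a)^{-s}$ and the tail $\sum_{n=N+1}^{\infty} (n+a)^{-s}$. The head already appears verbatim on the right-hand side of the claim, so for each fixed non-negative integer $N$ the whole problem reduces to showing that the tail equals $\frac{(N+a)^{1-s}}{s-1} - s\int_{N}^{\infty} \frac{x-[x]}{(x+a)^{s+1}}\,dx$.

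For the tail I would apply Abel (Euler) summation to $f(x) = (x+a)^{-s}$ on an interval $[N,M]$ with integer endpoints. Because the fractional part $x-[x]$ vanishes at every integer, the boundary terms in the summation formula drop out, and using $f'(x) = -s(x+a)^{-s-1}$ one obtains
\begin{equation*}
\sum_{n=N+1}^{M} \frac{1}{(n+a)^{s}} = \int_{N}^{M} \frac{dx}{(x+a)^{s}} - s\int_{N}^{M} \frac{x-[x]}{(x+a)^{s+1}}\,dx .
\end{equation*}
Letting $M \to \infty$, which is legitimate for $\Re(s) > 1$ since the series and both integrals then converge, and evaluating the elementary integral $\int_{N}^{\infty} (x+a)^{-s}\,dx = (N+a)^{1-s}/(s-1)$, gives exactly the tail expression sought. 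Equivalently, one may bypass the summation formula by writing the tail as the Riemann--Stieltjes integral $\int_{N}^{\infty} (x+a)^{-s}\,d[x]$, substituting $[x] = x-\{x\}$, and integrating the $\{x\}$-part by parts; the two routes carry out the same computation.

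The final step is the analytic continuation. I would note that $|x-[x]| \le 1$ forces $\int_{N}^{\infty} (x-[x])(x+a)^{-s-1}\,dx$ to converge absolutely and locally uniformly whenever $\Re(s) > 0$, and that differentiation under the integral sign (or Morera's theorem) shows this integral is holomorphic there. Since the head $\sum_{n=0}^{N}(n+a)^{-s}$ together with the term $(N+a)^{1-s}/(s-1)$ is meromorphic on all of $\mathbb{C}$ with a single simple pole at $s=1$, the right-hand side is holomorphic on $\{\Re(s) > 0\} \setminus \{1\}$ and agrees with $\zeta(s,a)$ on $\Re(s) > 1$; by the identity theorem the two sides coincide throughout $\Re(s) > 0$. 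The main obstacle is precisely this continuation step: one must justify the passage $M \to \infty$ only where it is valid and then \emph{argue}, rather than compute, that the resulting integral furnishes a genuine holomorphic extension, the algebraic manipulation that produces the formula being itself entirely routine.
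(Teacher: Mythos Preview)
Your argument is correct and is the standard route: split off the head, apply Euler--Abel summation to the tail on $[N,M]$, let $M\to\infty$ for $\Re(s)>1$, and then observe that the fractional-part integral converges absolutely and defines a holomorphic function for $\Re(s)>0$, so the identity persists there by analytic continuation. Note, however, that the paper does not supply its own proof of this lemma at all; it is quoted directly from Apostol \cite[p.~269, Theorem~12.21]{Apostol} as a preliminary result, and the proof there follows essentially the same Abel-summation-plus-continuation scheme you outline.
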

Next,  we state an important result which will play crucial role in obtaining the Laurent series expansion of $L(s, \chi)$ at $s=1$.  	
\begin{lemma}\cite[Proposition 3.2]{Knopfmacher}\label{Knopfmacher lemma}
Let $g(n)$ be a periodic arithmetical function with period $q>0$.  Then the sum $$\sum_{a=1}^q g(a)=0$$ if and only if  the following series
$$\sum_{n=1}^\infty  \frac{g(n) \log^k(n)}{n} $$ is convergent and  has the sum 
$$
\sum_{n=1}^\infty  \frac{g(n) \log^k(n)}{n} = \sum_{a=1}^{q} g(a) \gamma_k(a,  q),
$$
where $ \gamma_k (a,q)$ is defined as in \eqref{gamma_k(a,q)}.

\end{lemma}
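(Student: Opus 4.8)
The plan is to exploit the periodicity of $g$ by splitting the series according to residue classes modulo $q$ and comparing each class to the ``main term'' $\frac{\log^{k+1}(x)}{q(k+1)}$ that already appears in the definition \eqref{gamma_k(a,q)} of $\gamma_k(a,q)$. The condition $\sum_{a=1}^{q} g(a)=0$ will turn out to be exactly what is required to cancel the only potentially divergent contribution, and this single algebraic identity will yield both directions of the equivalence at once.

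First I would introduce, for a real parameter $x$, the truncated sum
\begin{align*}
S(x) := \sum_{n\le x} \frac{g(n)\log^k(n)}{n},
\end{align*}
and, for each residue $a$ with $1\le a\le q$, the restricted sum
\begin{align*}
T_a(x) := \sum_{\substack{n\le x\\ n\equiv a \pmod q}} \frac{\log^k(n)}{n}.
\end{align*}
Since $g(n)=g(a)$ whenever $n\equiv a \pmod q$, periodicity gives $S(x)=\sum_{a=1}^{q} g(a)\,T_a(x)$. Writing $L(x):=\frac{\log^{k+1}(x)}{q(k+1)}$ and adding and subtracting $L(x)$ in each class, I obtain the decomposition
\begin{align*}
S(x)=\sum_{a=1}^{q} g(a)\bigl(T_a(x)-L(x)\bigr) + L(x)\sum_{a=1}^{q} g(a).
\end{align*}
By \eqref{gamma_k(a,q)} each bracket $T_a(x)-L(x)$ tends to $\gamma_k(a,q)$ as $x\to\infty$, so the first (finite) sum converges to $\sum_{a=1}^{q} g(a)\gamma_k(a,q)$; only the last term can misbehave.

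Both directions of the equivalence now follow from this identity. If $\sum_{a=1}^{q} g(a)=0$, the last term vanishes identically, so $S(x)\to \sum_{a=1}^{q} g(a)\gamma_k(a,q)$; since $S(x)=S(\lfloor x\rfloor)$ is the partial sum of the series at $n=\lfloor x\rfloor$, convergence of $S(x)$ as $x\to\infty$ is the same as convergence of the series, giving both the convergence claim and the stated value. Conversely, if $\sum_{a=1}^{q} g(a)\ne 0$, then $L(x)\sum_{a=1}^{q} g(a)$ grows like a nonzero constant multiple of $\log^{k+1}(x)$, which diverges, while the first sum remains bounded; hence $S(x)$ diverges and the series cannot converge. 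This establishes the ``if and only if''.

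The main point requiring care is the existence of the limit defining $\gamma_k(a,q)$, that is, that $T_a(x)-L(x)$ actually converges. I would prove this by comparing the arithmetic-progression sum with the integral $\frac1q\int_a^x \frac{\log^k(t)}{t}\,dt=\frac{1}{q(k+1)}\bigl(\log^{k+1}(x)-\log^{k+1}(a)\bigr)$. Since $h(t):=\log^k(t)/t$ is eventually decreasing (for $t>e^k$), one has for all large $m$ the bounds $0\le q\,h(a+mq)-\int_{a+mq}^{a+(m+1)q}h(t)\,dt\le q\bigl(h(a+mq)-h(a+(m+1)q)\bigr)$, and summing the right-hand side telescopes to a convergent series because $h(t)\to 0$. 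Thus $T_a(x)-\frac1q\int_a^x h$ converges, and since $\log^{k+1}(x)$ changes by $o(1)$ over an interval of length $q$, this is equivalent to the convergence of $T_a(x)-L(x)$ to the finite constant $\gamma_k(a,q)$, completing the argument.
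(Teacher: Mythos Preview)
The paper does not actually prove this lemma: it is quoted verbatim from Knopfmacher \cite[Proposition 3.2]{Knopfmacher} as a preliminary result, with no argument supplied. So there is no ``paper's own proof'' to compare against. Your argument is correct and is in fact the standard one: split the partial sum by residue classes, add and subtract the common main term $L(x)=\log^{k+1}(x)/\bigl(q(k+1)\bigr)$, and observe that the resulting decomposition
\[
S(x)=\sum_{a=1}^{q} g(a)\bigl(T_a(x)-L(x)\bigr) + L(x)\sum_{a=1}^{q} g(a)
\]
isolates the only unbounded contribution. Both directions of the equivalence then drop out immediately, exactly as you say.

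One small remark on the closing paragraph: once you have shown that $T_a(x)-\tfrac{1}{q}\int_a^x h(t)\,dt$ converges, you are already done, since $\tfrac{1}{q}\int_a^x h(t)\,dt = L(x) - \tfrac{\log^{k+1}(a)}{q(k+1)}$ differs from $L(x)$ by a constant. The comment about $\log^{k+1}(x)$ varying by $o(1)$ over an interval of length $q$ is only needed if you proved convergence along the subsequence $x=a+Mq$ and want to upgrade to arbitrary real $x$; either reading is fine, but you could streamline by noting the constant difference directly.
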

To know more about the above generalized Euler constant $ \gamma_k(a,  q)$,  readers can see an interesting paper of Dilcher \cite{Dilcher}.

	In the coming section,  we present proof of our main results. 
	\section{Proof of Main Results}

		\begin{proof} [Proof of Theorem \ref{Laurent series coefficients for Riemann zeta function} ]
		From Lemma \ref{Integral representation of Riemann zeta function},  we know that
		\begin{equation} \label{Integral representation}
			\zeta(s) = \frac{s}{s-1} - s \int_{1}^{\infty} \frac{x- [x]}{x^{s+1}} dx , \hspace{0.4cm} \Re(s) > 0.  
		\end{equation}
		Now form the above equation it is clear that $\zeta(s)$ has a simple pole at $s=1$ with residue 1. Therefore,  it has a Laurent series expansion at $s=1$.  Let us suppose that 
		\begin{equation} \label{Laurent series of zeta}
			\zeta(s) = \frac{1}{s-1} + \sum_{k=0}^{\infty}{A_{k} (s-1)^k}
		\end{equation}
		Now our claim is that,  $$A_{k} = \frac{(-1)^k}{k!}{\gamma_{k}},  $$
		where $\gamma_k$ is defined as in \eqref{gamma_k}. 
   One can rewrite \eqref{Integral representation} as follows: 
		\begin{equation*} 
			\zeta(s) - \frac{1}{s-1}-1 = h(s),
			\end{equation*}
			where 
			\begin{equation*}
			h(s)= s \int_{1}^{\infty} \frac{[x]-x}{x^{s+1}} dx,
		\end{equation*}
		is analytic for $\Re(s)>0$.  Hence,  it will have a power series expansion around $s=1$ with $k$-th coefficient, $B_{k}= \frac{ h^k(1)}{k!}$,  that is,  
		\begin{equation} \label{Power series of h(s)}
			h(s)=  s \int_{1}^{\infty} \frac{[x]-x}{x^{s+1}} dx = \sum_{k=0}^{\infty}{B_{k} (s-1)^k}. 
		\end{equation}
		Now, by comparing the coefficients of \eqref{Laurent series of zeta} and  \eqref{Power series of h(s)}, we get $B_{0}=A_{0}-1$ and $B_{k}=A_{k}$, for all $k$. Therefore, our work is to finding the coefficients,$B_{k}$ i.e. the $k$-th derivative of $h(s)$ at $s=1$.  Let us simplify $h(s)$ as follows: 
{\allowdisplaybreaks		\begin{align*}
			h(s) &= s \int_{1}^{\infty} \frac{[x]-x}{x^{s+1}}dx\\
			&= s \lim_{r \to \infty} \int_{1}^{r} \frac{[x]-x}{x^{s+1}}dx\\
			&=  s \lim_{r \to \infty} \Bigg[\sum_{i=1}^{r-1} \int_{i}^{i+1} \frac{i}{x^{s+1}}dx - \int_{1}^{r} \frac{1}{x^s}dx \Bigg]\\
			&=  \lim_{r \to \infty} \Bigg[\sum_{i=1}^{r-1} \Big(\frac{i}{i^s} - \frac{i}{(i+1)^s}\Big) + \frac{s (r^{1-s}-1)}{s-1} \Bigg]\\
			&= \lim_{r \to \infty} \Bigg[\sum_{i=1}^{r-1} \frac{1}{i^s} - \frac{r-1}{r^s} + \frac{\Big((s-1)+1\Big) (r^{1-s}-1)}{s-1} \Bigg]\\
			&= \lim_{r \to \infty} \Bigg[\sum_{i=1}^{r-1} \frac{1}{i^s} + \frac{1}{r^s} - 1 + \frac{r^{1-s}}{s-1} - \frac{1}{s-1} \Bigg].  
		\end{align*}}
Further,  one can write 
		\begin{align*}
			- \frac{1}{s-1} + \frac{r^{1-s}}{s-1} &= - \frac{1}{s-1} + \frac{\exp(-(s-1) \log r)}{s-1}\\
			&= \sum_{l=1}^{\infty} \frac{(-1)^l (s-1)^{l-1}}{l !} \log^l(r).
		\end{align*}
		Utilizing this expression,  we obtain
$$h(s) = \lim_{r \to \infty} \Bigg[ \sum_{i=1}^{r} \frac{1}{i^s} - 1 + \sum_{l=1}^{\infty} (-1)^l \frac{ (s-1)^{l-1} \log^l r}{l !}  \Bigg].$$ 
		Now differentiating $k$-times at $s=1$,  one can see that 
		\begin{align*}
			h^{(k)} (1) &= \lim_{r \to \infty} \Bigg[  (-1)^k \sum_{i=1}^{r} \frac{\log^k i}{i} + (-1)^{k+1} \frac{k! \log^{k+1}r}{(k+1)!}  \Bigg] \\
			&= (-1)^k \lim_{r \to \infty} \Bigg[\sum_{i=1}^{r} \frac{\log^k i}{i} - \frac{\log^{k+1}r}{k+1} \Bigg].
		\end{align*}
		Therefore,  from the definition \eqref{gamma_k} of $\gamma_k$,  it follows that $B_{k} k! = (-1)^k \gamma_{k}.$ This completes the proof of Theorem \ref{Laurent series coefficients for Riemann zeta function}. 
	\end{proof}

	\begin{proof} [Proof of Theorem \ref{Laurent series coefficients for Hurwitz zeta function}]
From Lemma \ref{Integral representation of Hurwitz zeta function},  it is evident that $\zeta(s,a)$ has a simple pole at $s=1$ with residue $1$.  Let us suppose we write
 \begin{equation}\label{Laurent series of zeta(s,a)}
			\zeta(s,a)= \frac{1}{s-1} + \sum_{k=0}^{\infty} B_{k} (s-1)^{k},
		\end{equation}
		then our claim is that  $$B_{k}= \frac{(-1)^{k}}{k!} \gamma_{k}(a),$$
		where 
\begin{equation}\label{gamma_k(a)}
		\gamma_{k}(a)= \lim_{p \to \infty} \left\{ \sum_{i=0}^{p} \frac{(\log(i+a))^{k}}{(i+a)} - \frac{(\log(p+a))^{k+1}}{k+1} \right\}. 
		\end{equation}
		Let us consider,   for $N \in \mathbb{N}$,  $0<a \leq 1$,  $\Re(s)>0$,  an analytic function
		\begin{equation*}
			H_N(s,  a) = s \int_{N}^{\infty}\frac{[x]-x}{(x+a)^{s+1}} dx.  
		\end{equation*}
		Then we can write
{\allowdisplaybreaks	
		\begin{align*}
			H_N(s,  a)
			&= s \int_{N}^{\infty}\frac{[x]-x}{(x+a)^{s+1}}  dx\\
			&= s \lim_{p \to \infty} \left[ \int_{N}^{p}\frac{[x]}{(x+a)^{s+1}}dx - \int_{N}^{p}\frac{ x}{(x+a)^{s+1}}dx \right]\\
			&= s \lim_{p \to \infty}\left[ \sum_{i=N}^{p-1} \int_{i}^{i+1}\frac{i}{(x+a)^{s+1}}dx -\int_{N}^{p}\frac{ (x+a)-a}{(x+a)^{s+1}}dx \right]\\
			&=s \lim_{p \to \infty} \left[\sum_{i=N}^{p-1} \int_{i}^{i+1}\frac{i}{(x+a)^{s+1}}dx -\int_{N}^{p}\frac{1}{(x+a)^{s}}dx + a\int_{N}^{p}\frac{1}{(x+a)^{s+1}}dx \right]\\
			&=  \lim_{p \to \infty} \Bigg[ \sum_{i=N}^{p-1} \left(  \frac{i}{(i+a)^s} - \frac{ i}{(i+1+a)^s} \right) + \frac{s}{s-1} \left( \frac{1}{(p+a)^{s-1}} - \frac{1}{(N+a)^{s-1}}\right)  \\
			& \hspace{6.4cm}- a \left(\frac{1}{(p+a)^s}- \frac{1}{(N+a)^{s}}\right)\Bigg] \\
			&:=  \lim_{p \to \infty}  I_1(p) + I_2(p),
		\end{align*}}
		where
		\begin{align*}
			I_{1}(p)
			&=  \sum_{i=N}^{p-1} \left( \frac{i}{(i+a)^s}-\frac{i}{(i+1+a)^s} \right)
			=   \sum_{i=N}^{p-1} \frac{1}{(i+1+a)^s}+ \frac{N}{(N+a)^s} - \frac{p}{(p+a)^s}, \\
			I_{2}(p) 
			&= \frac{s}{s-1} \left( \frac{1}{(p+a)^{s-1}} - \frac{1}{(N+a)^{s-1}}\right) - a \left(\frac{1}{(p+a)^s}- \frac{1}{(N+a)^{s}}\right)\\
			&= \frac{(s-1)+1}{s-1} \left( \frac{1}{(p+a)^{s-1}} - \frac{1}{(N+a)^{s-1}}\right) - a \left(\frac{1}{(p+a)^s}- \frac{1}{(N+a)^{s}}\right).
		\end{align*}		 
	Now substituting the above expressions of $I_1(p)$ and $I_2(p)$ and after simplifying a bit, we get the following expression of $H_N(s,a)$, 
		\begin{align*}
			H_N(s,a) &= \lim_{p \to \infty} \left[\sum_{i=N}^{p-1} \frac{1}{(i+1+a)^s} + \frac{1}{s-1} \left( \frac{1}{(p+a)^{s-1}} - \frac{1}{(N+a)^{s-1}}\right)\right]
		\end{align*}
		Using the above expression of $H_N(s,a)$ in Lemma \ref{Integral representation of Hurwitz zeta function}, we arrive
			$$\zeta(s,a)= \sum_{n=0}^{N}\frac{ 1}{(n+a)^s} + \frac{(N+a)^{1-s}}{s-1} + \lim_{p \to \infty}\left[ \sum_{i=N}^{p-1} \frac{1}{(i+1+a)^s} + \frac{1}{s-1} \left(\frac{1}{(p+a)^{s-1}} - \frac{1}{(N+a)^{s-1}}\right)\right].$$
Upon simplification,  one can see that
			$$\zeta(s,a) = \lim_{p \to \infty} \left[\sum_{i=0}^{p} \frac{1}{(i+a)^s} + \frac{1}{(s-1)} \frac{1}{(p+a)^{s-1}}\right].$$
Now we write 
		\begin{align*}
			\frac{1}{(s-1)} (p+a)^{1-s} 
			&= \exp(-(s-1) \log(p+a)) \\
			&= \sum_{m=0}^{\infty} \frac{(-1)^m}{m!} (s-1)^{m-1} (\log(p+a))^{m}\\
			&= \frac{1}{s-1} + \sum_{m=1}^{\infty} \frac{(-1)^m}{m!} (s-1)^{m-1} (\log(p+a))^{m}.
		\end{align*}
		Therefore, 
	\begin{align}
			\zeta(s,a) &= \lim_{p \to \infty} \left[ \sum_{i=0}^{p} \frac{1}{(i+a)^s}   + \frac{1}{s-1} + \sum_{m=1}^{\infty} \frac{(-1)^m}{m!} (s-1)^{m-1} (\log(p+a))^{m} \right] \nonumber \\
		\Rightarrow 	\zeta(s,a) - \frac{1}{s-1} &= \lim_{p \to \infty} \left[\sum_{i=0}^{p} \frac{1}{(i+a)^s} + \sum_{m=1}^{\infty} \frac{(-1)^m}{m!} (s-1)^{m-1} (\log(p+a))^{m} \right] \nonumber \\
		& := F(s).  \label{F(s)}
	\end{align}
		Note that the function $F(s)$ is analytic at $s=1$,  so it will have a power series expansion around $s=1$.  From \eqref{Laurent series of zeta(s,a)},  we have 
		\begin{equation*}
			F(s) = \sum_{k=0}^{\infty} B_{k} (s-1)^{k}, 
		\end{equation*} 	  
		where the $k$-th coefficient $B_{k}$ is $ \frac{F^{(k)}(1)}{k!}$.  
		Now using the definition \eqref{F(s)} of $F(s)$ and differentiating $k$-times at $s=1$,  we see that 
		\begin{align*}
			F^{k} (1) &= \lim_{p \to \infty} \left[ \sum_{i=0}^{p} \frac{ (-1)^k (\log(i+a))^{k}}{(i+a)} + \frac{(-1)^{k+1}}{(k+1)}\ (\log(p+a))^{k+1} \right]\\
			&= (-1)^{k}   \lim_{p \to \infty} \left[ \sum_{i=0}^{p} \frac{(\log(i+a))^{k}}{(i+a)} - \frac{(\log(p+a))^{k+1}}{k+1} \right].  
		\end{align*}
		Finally,  using the definition \eqref{gamma_k(a)} of $\gamma_k(a)$,  it is clear that $k! B_k= (-1)^k \gamma_k(a)$.  
		Thus,  it completes the proof of our claim.	 
	\end{proof}
	
	\begin{proof} [Proof of Theorem \ref{Laurent series coefficients for Dirichlet l=function}]
It is well known that $L(s,  \chi)$ can be analytically continued to the whole complex plane as we are dealing with non-principal Dirichlet character $\chi$.  Thus,  we will have the following Laurent series expansion at $s=1$:
\begin{align}\label{Laurent series of L(s,chi)}
L(s,  \chi)= \sum_{k=0}^\infty L^{(k)}(1, \chi) \frac{(s-1)^k}{k!}.
\end{align}
Let us define $A(x)= \sum_{n \leq x} \chi(n)$.   Using Abel's summation formula,  for $\Re(s)>0$,  one can show that  $L(s,\chi)$ satisfies the following integral representation: 
	\begin{align*}
		L(s,\chi) &= s \int_{1}^{\infty} \frac{A(x)}{x^{s+1} } dx   \\
		&= ((s-1)+1) \int_{1}^{\infty} \frac{A(x)}{x^2} x^{-s+1} dx\\
		&= ((s-1)+1) \int_{1}^{\infty} \frac{A(x)}{x^2} \sum_{m=0}^{\infty}\frac{(-1)^m (\log x)^m}{m!} (s-1)^m dx \\
		&= \int_{1}^{\infty} \frac{A(x)}{x^2} \sum_{m=0}^{\infty}\frac{(-1)^m (\log x)^m}{m!} (s-1)^{m+1} dx  \\
		& + \int_{1}^{\infty} \frac{A(x)}{x^2} \sum_{m=0}^{\infty}\frac{(-1)^m (\log x)^m}{m!} (s-1)^m dx. 
	\end{align*}
From the above expression,  one can check that
	\begin{align}
		L^{(k)} (1,  \chi) &= \int_{1}^{\infty} \frac{A(x)}{x^2} \frac{(-1)^{k-1}}{(k-1)!} k! (\log x)^{k-1} dx + \int_{1}^{\infty} \frac{A(x)}{x^2} \frac{(-1)^{k}}{k!} k! (\log x)^{k} dx \nonumber \\
		&= (-1)^k \Bigg[ \int_{1}^{\infty} \frac{A(x)}{x^2} (\log x)^{k} dx - k \int_{1}^{\infty} \frac{A(x)}{x^2} (\log x)^{k-1} dx \Bigg]  \nonumber \\
		&= (-1)^k \lim_{r \to \infty}\Bigg[ \int_{1}^{r} \frac{A(x)}{x^2} \Big( (\log x)^{k} - k (\log x)^{k-1} \Big) dx \Bigg].  \label{kth derivative of L(s,chi) at 1}
	\end{align} 
Using Abel's summation formula,   we obtain
	\begin{align}
		\sum_{1<n \leq x}\chi(n) \frac{\log^{k}(n)}{n} &= A(x)  \frac{\log^{k}(x)}{x} - A(1) f(1) - \int_{1}^{x} A(t) f'(t) dt \nonumber \\
		&= A(x) \frac{\log^{k}x}{x} + \int_{1}^{x} A(t) \Big(\frac{\log^{k} t - k \log^{k-1} t}{t^2} \Big) dt.   \nonumber
\end{align}
Since $|A(x)|< \phi(q)$,  letting $x \rightarrow \infty$ in the above expression,  we see that
\begin{align}		
 \lim_{x \rightarrow \infty }\sum_{1<n \leq x}\chi(n) \frac{\log^{k}(n)}{n}		&= \lim_{x\rightarrow \infty} \int_{1}^{x} A(t) \Big(\frac{\log^{k} t - k \log^{k-1} t}{t^2} \Big) dt.  \label{Use of Able identity}
	\end{align}
Thus,  from \eqref{kth derivative of L(s,chi) at 1} and \eqref{Use of Able identity},  it is evident that
	\begin{align}
		L^{(k)} (1,  \chi) &= (-1)^k \lim_{x \to \infty} \sum_{n\leq x}\chi(n) \frac{\log^{k}(n)}{n} \nonumber \\
		&= (-1)^k \sum_{n=1}^{\infty} \chi(n) \frac{\log^{k}(n)}{n}  \nonumber \\ 
		&= (-1)^k \sum_{a=1}^{q} \chi(a) \gamma_k (a,q),   \label{kth derivative}
	\end{align} 
here in the final step we used Lemma \ref{Knopfmacher lemma} since $\chi$ is a non-principal character of modulus $q$ and $\sum_{a=1}^{q} \chi(a)=0$ holds.  Finally,  substituting \eqref{kth derivative} in \eqref{Laurent series of L(s,chi)},   we complete the proof of  Theorem \ref{Laurent series coefficients for Dirichlet l=function}.
	\end{proof}

\begin{proof} [Proof of Theorem \ref{Laurent series coefficients for $L$-Function associated to cusp form of weight $k$}]
	Using the definition of the gamma function and $L(f,s)$,  for $\Re(s)> \frac{k+1}{2}$,  one can show that 
\begin{align}\label{1st step}
(2 \pi)^{-s} \Gamma(s) L(f,  s) = \int_{0}^\infty W(y) y^{s-1} dy,
\end{align}
	where 
	\begin{align}\label{W(y)}
W(y)= \sum_{n=1}^{\infty} a_f (n) \exp(-2\pi ny).  
\end{align}
Now we divide the integral \eqref{1st step} in two parts, mainly,  we write as follows:
\begin{align}
(2 \pi)^{-s} \Gamma(s) L(f,  s) &  = \int_{0}^1 W(y) y^{s-1} dy + \int_{1}^{\infty} W(y) y^{s-1} dy \nonumber \\
& := I_1(y,  s) + I_2(y,  s). \label{I1 + I2}
\end{align}
Note that $I_2(y,  s)$ is an entire function in the variable $s$ as it can be shown that $W(y)=O\left(e^{-2\pi y} \right)$ as $y \rightarrow \infty$.  Now to simplify $I_1(y,  s)$, we use transformation formula   \eqref{Analogue of Jacobi theta transformation} for $W(y)$.  Thus,  using  \eqref{Analogue of Jacobi theta transformation},  we see that
\begin{align*}
I_1(y,  s) =  \int_{0}^1 W(1/y) y^{s-1-k} dy.  
\end{align*}
Changing the variable $y \rightarrow 1/y$ in the above integration,  it yields that
\begin{align}\label{Final_I1(y)}
I_1(y,  s)= \int_{1}^\infty W(y) y^{k-s-1} dy. 
\end{align}
Now one can clearly show that $I_1(y,  s)$ is entire in $s$.
Substituting \eqref{Final_I1(y)} in \eqref{I1 + I2},  we obtain
\begin{align}\label{analytic continuation}
(2 \pi)^{-s} \Gamma(s) L(f,  s) = \int_{1}^\infty  W(y) \left(y^{k-1-s}+ y^{s-1}   \right) dy.  
\end{align}
The above right side integral is an entire function in the variable $s$.  Thus,  it gives an analytic continuation of $L(f,s)$ to the whole complex plane.  
	Now one can write 
	\begin{align}\label{power series}
	 y^{k-1-s}+ y^{s-1} =  \frac{1}{y} \sum_{n=0}^\infty \left\{ y^k (-1)^n \log^n(y) + \log^n(y) \right\} \frac{s^n}{n!}. 
	\end{align}
This power series converges absolutely for any complex $s$. Therefore,  substituting \eqref{power series} in \eqref{analytic continuation} and interchanging summation and integration,  we derive that 
\begin{align}\label{1st power series}
(2 \pi)^{-s} \Gamma(s) L(f,  s) = \sum_{n=0}^\infty A(n,  k) \frac{s^n}{n!},  
\end{align}
	where 
	\begin{align}\label{A(n,k)}
	A(n,  k) = \int_{1}^\infty \frac{W(y)}{y} \left\{ y^k (-1)^n \log^n(y) + \log^n(y) \right\} dy.  
	\end{align}
Now let us suppose that 
\begin{align}\label{2nd power series}
\frac{(2\pi)^s}{\Gamma(s)} =  \sum_{n=1}^\infty B(n) \frac{s^n}{n!}.
\end{align} 	
To calculate $B(n)$,  we can use the fact that $$1/\Gamma(s)= s+ \gamma s^2 + \left( \frac{\gamma^2}{2}- \frac{\pi^2}{12} \right)s^3 + \cdots .$$
One can easily check that $B(1)=1,  B(2)=2( \gamma + \log(2\pi))$.  Finally,  substituting \eqref{2nd power series} in \eqref{1st power series},  we obtain
\begin{align*}
L(f,  s) = \sum_{n=1}^\infty C(n,  k) s^n,  
\end{align*}
where 
\begin{align}\label{defn_C(n,k)}
C(n, k) = \sum_{ \substack{0 \leq i ,  1\leq j, \\ i +j = n} }  \frac{A(i,  k)}{i!} \frac{ B(j)}{j!}.  
\end{align}
First,  let us try to find the coefficient of $s$ in the above Laurent series expansion of $L(f,s)$ at $s=0$.  Note that $C(1, k)= A(0,  k) B(1)$.   Thus,  $C(1,  k) = A(0, k)$ as $B(1)=1$.  
Now we shall try to simplify $A(0,k)$.  From \eqref{A(n,k)},  we have
\begin{align*}
A(0,  k) = \int_{1}^\infty \frac{W(y)}{y} \left\{ y^k   + 1 \right\} dy. 
\end{align*}
From the definition \eqref{W(y)} of $W(y)$,  one can easily check that the series is absolutely and uniformly convergent in any compact subset of $\Re(y)>0$.  Thus,  using \eqref{W(y)}  and interchanging summation and integration,  we arrive 
\begin{align*}
A(0,  k) = \sum_{n=1}^\infty a_f(n) \left( \int_{1}^\infty e^{-2\pi n y}  y^{k}  \frac{ dy}{y}  + \int_{1}^\infty  e^{-2\pi n y}  \frac{ dy}{y}    \right). 
\end{align*}
	Now we change the variable $y$ by $y/2n \pi$ to see that
	\begin{align}
	A(0,  k)  & = \sum_{n=1}^\infty a_f(n) \left( \int_{2n \pi}^\infty e^{-y} y^{k-1}  \frac{dy}{(2n\pi)^k} + \int_{2n \pi}^\infty e^{-y} y^{-1}  dy  \right) \nonumber \\
	& = \sum_{n=1}^\infty \frac{ a_f(n)}{(2n\pi)^k} \left(  \Gamma(k,  2 n \pi)  +  (2n\pi)^k\Gamma(0,  2 n \pi) \right) \nonumber \\
	& =   \sum_{n=1}^\infty \frac{ a_f(n) \Gamma(k,  2 n \pi) }{(2n\pi)^k} + \sum_{n=1}^\infty  a_f(n) \Gamma(0,  2 n \pi). \label{final A(0,k)}
	\end{align}
	In the penultimate step, we  used the definition of the incomplete gamma function \eqref{incomplete gamma}.  Using the asymptotic \eqref{asymp1} of the incomplete gamma function,  one can check that both of the above infinite series are convergent.  Now we shall try to evaluate $C(2,k)$.  From \eqref{defn_C(n,k)},  it is clear that 
	\begin{align}
	C(2,k) & = A(0,k) \frac{ B(2)}{2!} + A(1,k) B(1) \nonumber \\
	&= A(0,k) ( \gamma + \log(2\pi) ) + A(1,k).  \label{C(2,k)}
	\end{align}
We have already calculated $A(0,k)$ in \eqref{final A(0,k)}.  Thus,  we need to calculate only $A(1,k)$.   From the definition \eqref{A(n,k)} of $A(n,k)$,  we know that
\begin{align*}
A(1,  k) = \int_{1}^\infty \frac{W(y)}{y} \left\{ - y^k  \log(y) + \log(y) \right\} dy.  
\end{align*}
Employing the definition \eqref{W(y)} of $W(y)$,  one can see that 
\begin{align*}
A(1,  k) &=  \sum_{n=1}^\infty a_f(n) \left( - \int_{1}^\infty e^{-2\pi n y}  y^{k} \log(y)  \frac{ dy}{y}  
+ \int_{1}^\infty  e^{-2\pi n y} \log(y)  \frac{ dy}{y}    \right). 
\end{align*}
Again,  changing the variable variable $y$ by $y/2n \pi$,  it yields that
\begin{align}\label{final A(1,k)}
A(1,  k) =   \sum_{n=1}^\infty a_f(n)&  \Bigg( \int_{2n \pi}^\infty e^{-y} y^{k-1} \{ - \log(y)+\log(2n\pi) \}  \frac{dy}{(2n\pi)^k} \nonumber  \\
& + \int_{2n \pi}^\infty e^{-y} \{ \log(y)-\log(2n\pi) \}   dy  \Bigg). 
\end{align}
Now using the definition \eqref{gen incomplete gamma} of the incomplete gamma function,  we see that 
\begin{align}
A(1,k) & = \sum_{n=1}^\infty a_f(n) \left(  \frac{-\Gamma_1(k,  2n \pi) + \log(2n \pi) \Gamma(k,  2n\pi)}{(2n\pi)^k}   \right) \nonumber \\
& +  \sum_{n=1}^\infty a_f(n) \left( \Gamma_1(0,  2n \pi) -\log(2n \pi) \Gamma(0,  2n\pi)  \right).  \label{A(1,k)}
\end{align}
Using the asymptotic expansions \eqref{asymp1}, \eqref{asymp2} of $\Gamma(s,  a),  \Gamma_1(s,a)$,  one can easily show that the series present in \eqref{final A(1,k)} are absolutely convergent.  Finally,  substituting \eqref{A(1,k)} in \eqref{C(2,k)},  we get the final expression for $C(2,k)$. 
In a similar fashion,  one can calculate $C(n, k)$ for $n \geq 3$. 
 This finishes the proof of  Theorem \ref{Laurent series coefficients for $L$-Function associated to cusp form of weight $k$}.  
\end{proof}

\section{Numerical verification of Theorem \ref{Laurent series coefficients for $L$-Function associated to cusp form of weight $k$}} Using Mathematica software,  we numerically verified our Theorem \ref{Laurent series coefficients for $L$-Function associated to cusp form of weight $k$} for Ramanujan cusp form $\Delta(z)$,  which is an example of a weight $12$ cusp form over $SL_2(\mathbb{Z})$.  To evaluate $C(n, 12)$,  we used only $30$ terms of each infinite series present in \eqref{Final C(1,k)}-\eqref{Final C(2,k)}. 

\begin{center}

\begin{tabular}{ |p{1cm}| p{5 cm}|p{5 cm}|  }
	\hline
	\multicolumn{3}{|c|}{Special values} \\
	\hline
	$n$ & $\frac{L^{(n)}(\Delta,  0)}{n!}$ & $C(n,  12)$\\
	\hline
	1& 0.01048627312924115  & 0.01048627312924116  \\
	\hline
	2&  0.01894504907238154   & 0.01894525791618929\\
	\hline
\end{tabular}
\end{center}
	
\section{Acknowledgement}
	The last author wishes thank Science and Engineering Research Board (SERB),  India,   for giving MATRICS grant (File No. MTR/2022/000545) and SERB CRG grant (File No.  CRG/2023/002122).

\end{document}